\newtheorem{theorem}{Theorem}[section]
\newtheorem{proposition}[theorem]{Proposition}
\newtheorem{definition}[theorem]{Definition}
\theoremstyle{remark}
\newtheorem{remark}[theorem]{Remark}
\numberwithin{equation}{section}
\author[Michael J.\ Schlosser]{Michael J.\ Schlosser$^*$}
\address{Fakult\"at f\"ur Mathematik, Universit\"at Wien,
Oskar-Morgenstern-Platz~1, A-1090 Vienna, Austria}
\email{michael.schlosser@univie.ac.at}
\urladdr{http://www.mat.univie.ac.at/{\textasciitilde}schlosse}
\thanks{$^*$Partly supported by FWF Austrian Science Fund
grant F50-08 within the SFB
``Algorithmic and enumerative combinatorics''.}
\author[Meesue Yoo]{Meesue Yoo$^{**}$}
\address{Fakult\"at f\"ur Mathematik, Universit\"at Wien,
Oskar-Morgenstern-Platz~1, A-1090 Vienna, Austria}
\email{meesue.yoo@univie.ac.at}
\thanks{$^{**}$Fully supported by FWF Austrian Science Fund
grant F50-08 within the SFB
``Algorithmic and enumerative combinatorics''.}
\title{Basic hypergeometric summations from rook theory}
\dedicatory{Dedicated to Professor Krishnaswami Alladi
on the occasion of his 60th birthday}
\subjclass[2010]{Primary 05A19;
Secondary 05A15, 05A30, 11B65, 11B73, 11B83}
\keywords{rook numbers, $q$-analogues,
alpha-parameter model, matchings}
\newcommand{\N}{\mathbb N}
\begin{document}

\begin{abstract}
We employ a one-variable extension of $q$-rook theory to give
combinatorial proofs of some basic hypergeometric summations,
including the $q$-Pfaff--Saalsch\"utz summation and
a ${}_4\phi_3$ summation by Jain.
\end{abstract}
 
 
\maketitle

\section{Introduction}

The theory of $q$-series has a prominent history.
It made its first appearance in a combinatorial
study by Euler on partitions of numbers.
Among the first $q$-series identities were explicit summations for 
two different $q$-analogues of the exponential series.
These identities were later unified by Gau{\ss}, Heine,
and Cauchy who, all three independent from each other,
discovered and proved the nonterminating $q$-binomial theorem.
This initiated the systematic study
of $q$-hypergeometric series, or synonymously,
basic hypergeometric series, ``basic'' referring to the {\em base} $q$,
as objects of their own interest, separate from combinatorics.
While in the early days only a small number of mathematicians studied
the combinatorics of $q$-series (most notably, J.~J.~Sylvester
in the 19th century, and P.~A.~MacMahon and I.~Schur in the early
20th century, to name just a few figures whose research had big impact)
the situation rapidly changed in the 1960s when B.~Gordon found
extensions of the Rogers--Ramanujan identities with combinatorial
interpretations, after which many more people entered the scene.
See Andrews' book chapter~\cite{A} for an account of
the history of $q$-series and partitions.
Further, the preface of Gasper and Rahman's textbook
\cite{GR} provides a brief history of basic hypergeometric
series, and the book itself contains further background on the subject.

Basic hypergeometric series appear from time to time in
combinatorial studies. It is particularly instructive to see
combinatorial proofs of $q$-series identities. Having a combinatorial
interpretation of an identity at hand leads to a better understanding,
since one gets a feeling why the identity is true.
Now, focusing on combinatorial interpretations
and given a reasonably simple identity,
it is by all means legitimate to ask the question:
is there a combinatorial proof for it?
For instance, for the well-known $q$-Pfaff--Saalsch\"utz summation,
\begin{equation}\label{32id}
\sum_{k=0}^n\frac{(a,b,q^{-n};q)_k}{(q,c,abq^{1-n}/c;q)_k}q^k=
\frac{(c/a,c/b;q)_n}{(c,c/ab;q)_n},
\end{equation}
(see Section~\ref{sec:defs} for the notation)
several combinatorial proofs are known \cite{AB,G,GZ,Schl0,Y,Z}.
But what about a combinatorial proof of the following
summation by Jain~\cite{J}
(which is a $q$-analogue of a ${}_3F_2$ summation by Bailey~\cite{B}
and can be written as a summation for a specific ${}_4\phi_3$ series)?
\begin{equation}\label{43id}
\sum_{k=0}^n \frac{(a,b;q)_k (q^{-2n};q^2)_k}
{(q,q^{-2n};q)_k(abq;q^2)_k}q^k=
\frac{(aq,bq;q^2)_n}{(q,abq;q^2)_n}.
\end{equation}
In this paper we present a new combinatorial proof of \eqref{32id}
and also, to the best of our knowledge,
a first combinatorial proof of \eqref{43id}, in addition to
a few similar results.
We achieve this by employing a specific one-variable extension
of $q$-rook theory with extra variable $a$, which we shall refer
to as $(a;q)$-rook theory for short.
(The $(a;q)$-rook numbers we are dealing with in this paper
are actually special cases of the elliptic rook numbers which
we recently have considered in \cite{SY0,SY1,SY2}. 
While the elliptic rook numbers satisfy nice identities,
they don't factorize in general. However, for particular boards the $(a;q)$-rook
numbers do factorize into closed form. This is the reason why
we focus  on the $(a;q)$-case here which is still more general
than the $q$-case.)

Already earlier, Haglund~\cite{H} has made out intimate
connections between rook theory and (basic) hypergeometric series.
In particular, he showed that a big class of
($q$-)rook numbers generally admit a representation in terms
of (basic) hypergeometric series of Karlsson--Minton type.
In our case, we are on one hand (for three different rook models)
working with $(a;q)$-rook numbers,
i.e., we add an extra parameter to the $q$-rook numbers.
On the other hand, we are looking at very special situations,
obtained by restricting to special boards,
where the $(a;q)$-rook numbers nicely factorize.
Since the $(a;q)$-rook numbers 
satisfy certain product formulas,
we are thus able to obtain explicit summations, by substituting
 the factorized forms in the product formulas.

In Section~\ref{sec:defs} we recall standard $q$-series notation
and introduce the special $(a;q)$-weights that we use.
Section~\ref{sec:rooks} is devoted to $(a;q)$-rook theory.
We explain all the ingredients we need for the $(a;q)$-extensions
of the different rook models that we work with, namely,
the standard model, the (more general) alpha-parameter model,
and the matching model. From these we deduce
basic hypergeometric summations as applications.

\section{Standard $q$-notation and $(a;q)$-weights}\label{sec:defs}


For a parameter $q$, called the \emph{base}, and variable $u$,
the $q$-shifted factorial is defined by
\begin{equation*}
(u;q)_0=1,\qquad\text{and}\qquad(u;q)_n=(1-u)(1-uq)\dots(1-uq^{n-1}).
\end{equation*}
(The index $n$ can also be $\infty$, then the product is an infinite product
in which case one requires $|q|<1$, for convergence.)
For brevity, we frequently use the notation
\begin{equation*}
(a_1,\dots,a_m;q)_n=(a_1;q)_n\dots(a_m;q)_n.
\end{equation*}
The $q$-number of $z$ is defined as
\begin{equation*}
[z]_q=\frac{1-q^z}{1-q}.
\end{equation*}

We now introduce {\em $(a;q)$-weights}
which include an additional variable $a$. Define 
\begin{equation*}
w_{a;q}(k)=\frac{(1-a q^{2k+1})}{(1-a q^{2k-1})}q^{-1}, 
\end{equation*}
\begin{equation*}
W_{a;q}(k)=\frac{(1-a q^{2k+1})}{(1-aq)}q^{-k}, 
\end{equation*}
\begin{equation*}
[z]_{a;q}=\frac{(1-q^z)(1-a q^z)}{(1-q)(1-aq)}q^{1-z}, 
\end{equation*}
for any value $k$, which we call the \emph{small} weights,
\emph{big} weights, and the \emph{$(a;q)$-number} of $z$, respectively.
Note that in the limit case $a\to \infty$, we recover
the $q$-weights
\begin{equation*}
\lim_{a\to \infty}w_{a;q}(k)=q,\quad \lim_{a\to \infty}W_{a;q}(k)=
q^k,\quad \lim_{a\to \infty}[z]_{a;q}=\frac{1-q^z}{1-q}=[z]_q.
\end{equation*}
For a positive integer $k$, we have 
\begin{equation*}
W_{a;q}(k)=\prod_{i=1}^k w_{a;q}(i).
\end{equation*}
Other useful properties are 
\begin{equation*}
[y+z]_{a;q} = [y]_{a;q}+W_{a;q}(y)[z]_{aq^{2y};q}, 
\end{equation*}
and 
\begin{equation*}
W_{a;q}(k+n)=W_{a;q}(k)W_{aq^{2k};q}(n). 
\end{equation*}

\begin{remark}
This $(a;q)$-weight was first defined in \cite{Schl1} to
generalize the binomial theorem for noncommuting variables.
That is, in the unital algebra $\mathbb{C}_{a;q}[x,y]$ over $\mathbb C$
defined by the following commutation relations 
\begin{align*}
yx&=\frac{(1-a q^3)}{(1-aq)}q^{-1}xy,\\
xa&=qax,\\ 
ya&=q^2 ay,
\end{align*} 
the binomial theorem
\begin{equation*}
(x+y)^n =\sum_{k=0}^n \begin{bmatrix}n\\k\end{bmatrix}_{a;q}x^k y^{n-k} 
\end{equation*}
holds, where the $(a;q)$-binomial coefficients are defined by 
\begin{equation}\label{aqbin}
\begin{bmatrix}n\\k\end{bmatrix}_{a;q}:=
\frac{(q^{1+k},aq^{1+k};q)_{n-k}}{(q,aq;q)_{n-k}}q^{k(k-n)}
=\frac{[n]_{a;q}!}{[k]_{a;q}![n-k]_{a;q}!},
\end{equation}
with the $(a;q)$-factorials being defined by 
\begin{equation*}
[0]_{a;q}!=1,\qquad\qquad[n]_{a,q}!=[n]_{a,q}[n-1]_{a,q}!.
\end{equation*}
The $(a;q)$-binomial coefficients are symmetric in $(k,n-k)$
(whereas the more general elliptic extension of \eqref{aqbin}, considered
in \cite{Schl0,Schl1}, is not). They satisfy
the two recursions
\begin{align*}
\begin{bmatrix}n+1\\k\end{bmatrix}_{a;q}&=
\begin{bmatrix}n\\k\end{bmatrix}_{a;q}+
\frac{(1-aq^{2n+2-k})}{(1-aq^k)}q^{k-n-1}\begin{bmatrix}n\\k-1\end{bmatrix}_{a;q},\\
\begin{bmatrix}n+1\\k\end{bmatrix}_{a;q}&=
\frac{(1-aq^{n+1+k})}{(1-aq^{n+1-k})}q^{-k}\begin{bmatrix}n\\k\end{bmatrix}_{a;q}+
\begin{bmatrix}n\\k-1\end{bmatrix}_{a;q},
\end{align*}
which, together with the initial conditions
\begin{equation*}
\begin{bmatrix}0\\0\end{bmatrix}_{a;q}=1,\qquad\text{and}\quad
\begin{bmatrix}n\\k\end{bmatrix}_{a;q}=0,\qquad\text{for
$k>n$ or $k<0$}, \end{equation*}
determine them uniquely.
\end{remark}


\section{$(a;q)$-Rook theory}
\label{sec:rooks}
For an introduction to classical rook theory, see \cite{BCHR}.
A lot of material on generalized rook theory which we survey
is borrowed from our papers
\cite{SY0,SY1,SY2} on elliptic rook theory. As mentioned in the introduction,
the $(a;q)$-case is just a special case of the elliptic case which admits 
particularly attractive closed formulas. We utilize the closed formulas 
from the $(a;q)$-extensions of different rook models
to derive some concrete basic hypergeometric summations. 


\subsection{$(a;q)$-Extension of the standard model}

Let $\N$ denote the set of positive integers. We consider a finite subset 
of the $\N \times \N$ grid which we refer to as a \emph{board}, 
and label the columns and rows by $1, 2, \dots$, from the left
and from the bottom, respectively. We use $(i,j)$ to denote the cell in the 
intersection of the column $i$ and the row $j$.

Let $B(b_1, b_2,\dots, b_n)$ denote the set of cells 
\begin{displaymath}
B=B(b_1,\dots, b_n)=\{(i,j)~|~ 1\le i\le n,~ 1 \le j\le b_i\},
\end{displaymath}
for nonnegative integer $b_i$'s, for all $i$. 
If a board $B$ can be represented by the set $B(b_1,\dots, b_n)$ with 
nondecreasing integer sequence $0\le b_1\le \cdots \le b_n$,
then the board $B=B(b_1,\dots, b_n)$ is called a \emph{Ferrers board}.
Given a Ferrers board, we say that we \emph{place $k$ nonattacking rooks}
in $B$ by choosing a $k$-subset of $B$ such that no two elements
have a common coordinate.
Let $\mathcal{N}_{k}(B)$ denote the set of all nonattacking 
placements of $k$ rooks in $B$. 
Note that $|\mathcal{N}_{k}(B)|$ is the original $k$-th rook number
defined in \cite{KR}.

Given a rook placement $P\in\mathcal{N}_{k}(B)$,
a rook in $P$ is said to {\em cancel} all the cells to the right in the same row
and  all the cells below it in the same column. Let $U_B(P)$ denote the 
set of cells in $B-P$ which are not cancelled by any rook in $P$. 
We define the $(a;q)$-analogue of the $k$-th rook number by assigning 
the small weights $w_{a;q}(j)$ to the respective cells in $U_B(P)$,
depending on their position and the configuration of rooks.

\begin{definition}
Given a Ferrers board $B=B(b_1,\dots, b_n)$, let the $k$th
$(a;q)$-rook number be
\begin{equation}\label{eqn:aqrook}
r_k (a,q;B)=\sum_{P\in \mathcal N_k(B)}wt(P), 
\end{equation}
where 
$$wt(P)=\prod_{(i,j)\in U_B(P)}w_{a;q}(i-j-r_{(i,j)}(P)),$$
and $r_{(i,j)}(P)$ counts the number of rooks in $P$ positioned in
the north-west region of $(i,j)$.
\end{definition}

This $(a;q)$-analogue of the rook numbers satisfy the following product 
formula which was proved with original rook numbers, i.e., in the
$a\to\infty$, $q\to 1$ case, by Goldman, Joichi,
and White \cite{GJW4}. 

\begin{theorem}[\cite{SY0}]\label{thm:aqprod}
For any Ferrers board $B=B(b_1,\dots, b_n)$, we have 
\begin{equation*}
\prod_{i=1}^n [z+b_i-i+1]_{aq^{2(i-1-b_i)};q}=
\sum_{k=0}^n r_{n-k}(a,q;B)\prod_{j=1}^k [z-j+1]_{aq^{2(j-1)};q}.
\end{equation*}
\end{theorem}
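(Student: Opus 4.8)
The plan is to prove the factorization theorem by induction on $n$, the number of columns, using a column-by-column decomposition of rook placements that is standard in the Goldman--Joichi--White approach but must now be carried out with the $(a;q)$-weights.

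First I would set up the base case. For $n=0$ the board is empty, both sides equal $1$ (the empty product on the left, and the single $k=0$ term $r_0(a,q;\emptyset)=1$ on the right), so the identity holds trivially. For the inductive step, I would assume the theorem for boards with $n-1$ columns and consider $B=B(b_1,\dots,b_n)$. The natural move is to peel off the last column. Since $B$ is a Ferrers board, the last column has the largest height $b_n$, and I would relate $r_k(a,q;B)$ to the rook numbers of the truncated board $B'=B(b_1,\dots,b_{n-1})$ via a recurrence obtained by conditioning on whether the last column contains a rook and, if so, in which row.

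The heart of the argument is a weighted column-recurrence for the $(a;q)$-rook numbers. Given a placement $P$ on $B$, I would split $\mathcal N_k(B)$ according to the occupancy of column $n$: either column $n$ is empty (giving placements of $k$ rooks on $B'$, but with the $b_n$ cells of column $n$ now contributing uncancelled-cell weights), or column $n$ holds a rook in some available row (giving placements of $k-1$ rooks on $B'$). The key computation is to evaluate, using the multiplicative property $W_{a;q}(k)=\prod_{i=1}^k w_{a;q}(i)$ and the shift relation $W_{a;q}(k+n)=W_{a;q}(k)W_{aq^{2k};q}(n)$, the total small-weight contribution of column $n$ in each case. When column $n$ is empty its $b_n$ cells contribute a product of small weights that telescopes into a factor of the form $[\,b_n-n+1+\text{(something)}\,]_{aq^{\cdots};q}$-type expression; when a rook sits in column $n$, summing the weight over the admissible rows should collapse, via the addition formula $[y+z]_{a;q}=[y]_{a;q}+W_{a;q}(y)[z]_{aq^{2y};q}$, into precisely the bracketed factor $[z+b_n-n+1]_{aq^{2(n-1-b_n)};q}$ appearing as the new term on the left-hand side.

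The main obstacle I anticipate is bookkeeping the base-shift arguments $aq^{2(\cdot)}$ correctly: the statistic $r_{(i,j)}(P)$ counting north-west rooks, together with the offset $i-j$, forces each cell's weight to carry a $q$-power and an $a$-shift that depend on the rook configuration, and these must be made to align perfectly so that the telescoping and the addition formula produce exactly the shifts $aq^{2(i-1-b_i)}$ and $aq^{2(j-1)}$ demanded by the statement. Concretely, I would match the left-hand side's evaluation at $z=b_n-n+\ell$ for the appropriate $\ell$ against the inductive hypothesis applied to $B'$, verifying that the extra column's contribution multiplies the $B'$-generating identity by the single new factor. Once the weighted recurrence is established, the inductive hypothesis for $B'$ combined with the elementary identities for $W_{a;q}$ and $[\,\cdot\,]_{a;q}$ yields the claim; the substitution of the factorized column weight into the product formula is the step where the $(a;q)$-structure does the real work, and getting its shift parameters exactly right is where I expect the difficulty to lie.
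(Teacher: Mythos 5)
Your plan is sound and would yield a correct proof, but it follows a genuinely different route from the one the paper relies on. The paper defers the proof of Theorem~\ref{thm:aqprod} to \cite{SY0}, where (as the sketch given for the analogous Theorem~\ref{thm:aqalpha} makes explicit) the argument is the Goldman--Joichi--White style double count: one attaches $z$ extra rows of width $n$ below $B$, evaluates $\sum_{P}wt(P)$ over placements of $n$ nonattacking rooks on the extended board $B_z$ once columnwise (giving the left-hand side) and once by separating the rooks in $B$ from those in the extension (giving the right-hand side), and then removes the restriction that $z$ be a nonnegative integer by a polynomiality argument. You instead propose induction on the number of columns, driven by the column recursion (which is exactly Proposition~\ref{prop:recur} of the paper) together with the addition formula $[y+z]_{a;q}=[y]_{a;q}+W_{a;q}(y)[z]_{aq^{2y};q}$; one checks that $aq^{2(n-1-b_n)}q^{2(b_n-n+1+k)}=aq^{2k}$, so that splitting $[z+b_n-n+1]_{aq^{2(n-1-b_n)};q}\prod_{j=1}^k[z-j+1]_{aq^{2(j-1)};q}$ via the addition formula at $y=b_n-n+1+k$ reproduces precisely the two terms of the recursion, and the induction closes. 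This works, and it buys a shorter, purely algebraic verification once the recursion is granted; what it loses is the direct combinatorial interpretation of both sides that the extended-board argument provides, and it quietly relocates all the weight bookkeeping into the proof of the recursion itself. One detail in your write-up to fix: when a rook occupies column $n$, summing the weights over the admissible rows produces the $z$-free factor $[b_n-k+1]_{aq^{2(n-1-b_n)};q}$ of the recursion, not the factor $[z+b_n-n+1]_{aq^{2(n-1-b_n)};q}$; the latter only emerges after both cases are recombined with the inductive hypothesis through the addition formula.
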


\begin{remark}
In \cite{SY0}, we prove Theorem \ref{thm:aqprod} with more general
rook numbers. That is, the rook numbers $r_k (a,b;q,p;B)$
and the weights used to define $r_k$ in \eqref{eqn:aqrook} are elliptic
(i.e., meromorphic and doubly-periodic) and include two more parameters
$b$ and $p$. The
$(a;q)$-rook numbers can be obtained from the 
elliptic ones by letting $p\to 0$ and $b\to 0$. 
\end{remark}

By distinguishing the cases when there is a rook or not in the last column, 
we obtain a recursion for the $(a;q)$-rook numbers. 

\begin{proposition}\label{prop:recur}
Let $B$ be a Ferrers board with $l$ columns of height at most $m$,
and $B\cup m$ denote the board obtained by adding the $(l+1)$-st
column of height $m$
to the right of $B$. Then, for integer $k$ with $1\le k \le l+1$, we have 
\begin{equation*}
r_k(a,q;B\cup m) =W_{aq^{2(l-m)};q}(m-k)r_k (a,q;B)+[m-k+1]_{aq^{2(l-m)};q}
r_{k-1}(a,q;B),
\end{equation*}
assuming the conditions 
\begin{align*}
r_k (a,b;q,p;B)={}&0\qquad\text{for $k<0$ or $k>l$, and }\\
r_0 (a,b;q,p;B)={}&1\qquad\text{for $l=0$, i.e.\ for
$B$ being the empty board}.
\end{align*}
\end{proposition}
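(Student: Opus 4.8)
The plan is to prove the recursion bijectively, by splitting the set $\mathcal{N}_k(B\cup m)$ of $k$-rook placements according to whether the newly added column $l+1$ (of height $m$) carries a rook. Since $B$ has only $l$ columns and all heights are at most $m$, every rook sits in one of the rows $1,\dots,m$, and the stated boundary conventions ($r_k=0$ for $k<0$ or $k>l$, and $r_0=1$ for the empty board) make the two terms meaningful at the extreme values $k=1$ and $k=l+1$. Throughout I would use three facts: a rook of $B$ cancels the cell it creates in column $l+1$ of its own row; a rook placed in column $l+1$ cancels only the cells below it (there is nothing to its right); and column $l+1$ lies east of all of $B$, so such a rook never lies north-west of a cell of $B$ and never cancels a cell of $B$. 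Hence in either case the weights of the cells of $U_B$ lying inside $B$ are exactly the weights they carry in the corresponding placement on $B$.

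For the first term I take placements with no rook in column $l+1$; these are precisely the extensions of placements $P\in\mathcal{N}_k(B)$, and by the last remark the $B$-part of the weight is unchanged, equal to $wt(P)$. The uncancelled cells of column $l+1$ are exactly the cells $(l+1,j)$ whose row $j\in\{1,\dots,m\}$ carries no rook, and there are $m-k$ of them. Ordering these empty rows as $u_1<\dots<u_{m-k}$, I would compute the north-west rook count at $(l+1,u_t)$ to be $k-u_t+t$, so the exponent $(l+1)-u_t-(k-u_t+t)=l+1-k-t$ is independent of the actual row. Thus the column contributes $\prod_{t=1}^{m-k}w_{a;q}(l+1-k-t)$, a factor independent of $P$; using $w_{aq^{2\gamma};q}(i)=w_{a;q}(i+\gamma)$ together with $W_{a;q}(n)=\prod_{i=1}^n w_{a;q}(i)$, this product collapses to $W_{aq^{2(l-m)};q}(m-k)$. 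Summing $wt(P)$ over $P$ then yields the first term.

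For the second term I take placements with a rook in column $l+1$, say in row $j$; deleting it gives a placement $P'\in\mathcal{N}_{k-1}(B)$ whose occupied rows avoid $j$, and again the $B$-part of the weight equals $wt(P')$. If the empty rows of $P'$ in $\{1,\dots,m\}$ are $u_1<\dots<u_{m-k+1}$, then $j=u_s$ for some $s$, and the surviving uncancelled cells of column $l+1$ are exactly those $u_t$ with $t>s$. The same cancellation bookkeeping gives the exponent at $(l+1,u_t)$ as $(l+2-k)-t$, once more independent of the row, so the column contributes $\prod_{t=s+1}^{m-k+1}w_{a;q}(l+2-k-t)$. Summing over the admissible positions of the last-column rook, the row labels $u_t$ drop out entirely, and after the shift $w_{aq^{2\gamma};q}(i)=w_{a;q}(i+\gamma)$ I am left with the sum of partial products $\sum_{s=0}^{m-k}W_{aq^{2(l-m)};q}(s)$, which is independent of $P'$; summing over $P'$ then produces $r_{k-1}(a,q;B)$ times this factor.

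The remaining point, which I regard as the crux, is the telescoping evaluation $\sum_{s=0}^{N-1}W_{a;q}(s)=[N]_{a;q}$ (valid for any base), needed to identify the last factor with $[m-k+1]_{aq^{2(l-m)};q}$. I would derive it directly from the addition formula $[y+z]_{a;q}=[y]_{a;q}+W_{a;q}(y)[z]_{aq^{2y};q}$ by setting $z=1$ and noting $[1]_{a;q}=1$, which gives $W_{a;q}(s)=[s+1]_{a;q}-[s]_{a;q}$ and hence the telescoping sum since $[0]_{a;q}=0$. The genuinely delicate part, however, is the combinatorial bookkeeping in the two cases: tracking exactly which cells of column $l+1$ survive cancellation and verifying that each surviving weight depends only on the rank $t$ of its empty row and not on the row itself. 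This position-independence is precisely what lets both column contributions factor out of the sum over placements and be recognised as $W_{aq^{2(l-m)};q}(m-k)$ and $[m-k+1]_{aq^{2(l-m)};q}$, respectively.
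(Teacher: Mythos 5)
Your proof is correct and takes essentially the same approach the paper intends: the paper omits the details (deferring to \cite{SY0}) but states that the recursion follows ``by distinguishing the cases when there is a rook or not in the last column,'' which is exactly your case split. Your cancellation bookkeeping (the north-west rook count $k-u_t+t$, the position-independent weight arguments $l+1-k-t$ and $l+2-k-t$) and the telescoping identity $\sum_{s=0}^{N-1}W_{a;q}(s)=[N]_{a;q}$ all check out.
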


In the case of a rectangular shape board $B=[l]\times [m]$,
where $[n]:=\{1,2, \dots, n\}$, the $(a;q)$-rook number has a closed
form expression which can be proved by the recursion in 
Proposition~\ref{prop:recur}:
\begin{equation}\label{eqn:aqrect}
r_k(a,q;[l]\times[m])=
q^{\binom{k+1}{2}-lm}\begin{bmatrix}l\\k\end{bmatrix}_{q}
\frac{[m]_q !}{[m-k]_q !}
\frac{(aq^{l-m-k} ;q)_{k}(aq^{1+2l-2m};q^2)_{m-k}}{(aq^{1-2m};q^2)_{m}}. 
\end{equation}
For more details, including the omitted proofs, see \cite{SY0}.


\subsubsection{$r$-Restricted Lah numbers}


The $r$-restricted Lah numbers count the number of
placements of the elements $1,2,\dots,n$ into $k$ nonempty tubes
of linearly ordered elements such that $1,2,\dots,r$ are in distinct tubes
(cf.\ \cite{NR}, or \cite{MS}). These numbers admit a rook theoretic 
interpretation when $B$ is the board $\mathsf L^{(r)}_n=[n+r-1]\times[n-r]$.
In \cite[Subsection 3.4]{SY0}, we have established a correspondence 
between the rook configurations $P$ of $n-k$ nonattacking rooks 
on $\mathsf L^{(r)}_n$ and the set of placements $T$
of the elements  $1,2,\dots,n$ into $k$ nonempty tubes of linearly ordered
elements such that the first $r$ numbers $1,2,\dots,r$ are in distinct tubes. 
For the full description of the correspondence, refer to \cite{SY0}.

For the Ferrers board $B=\mathsf L^{(r)}_n$, the product formula in 
Theorem \ref{thm:aqprod} becomes
\begin{align}\label{eqn:aqLahprod}
&\prod_{i=1}^{n-r} [z+n-i]_{aq^{2(i-n)};q}
\prod_{i=1}^r [z-i+1]_{aq^{2(i-1)};q}\notag\\
&=\sum_{k=r}^{n}r_{n-k}(aq^{2(1-r)},q;\mathsf L^{(r)}_n)
\prod_{j=1}^k [z-j+1]_{aq^{2(j-1)};q},
\end{align}
after doing certain shifts of variables and cancellation of factors.  
We define an $(a;q)$-analogue of the $r$-restricted Lah numbers by
$$\mathcal L_{n,k}^{(r)}(a,q):= r_{n-k}(aq^{2(1-r)},q;\mathsf L^{(r)}_n).$$
It can be shown that $\mathcal L_{n,k}^{(r)}(a,q)$ satisfy
the following recursion
\begin{equation*}\label{eqn:recellrlah}
\mathcal{L}^{(r)}_{n+1,k}(a,q)=
W_{aq^{-2n};q}(n+k-1)\,\mathcal{L}^{(r)}_{n, k-1}(a,q)
+[n+k]_{aq^{-2n};q}\,\mathcal{L}^{(r)}_{n,k}(a,q),
\end{equation*}
assuming the initial conditions
\begin{align*}
\mathcal{L}^{(r)}_{n,k}(a,q)&=0\qquad\text{for $k<r-1$ or $k>n$},\notag\\
\mathcal{L}^{(r)}_{r-1,r-1}(a,q)&=1\qquad\text{(an artificial but
felicious initial condition)}.
\end{align*}

Since the board $\mathsf L^{(r)}_n$ is of rectangular shape,
\eqref{eqn:aqrect} 
gives the closed form formula for $\mathcal L_{n,k}^{(r)}(a,q)$, namely,
\begin{equation}\label{eqn:aqLah}
\mathcal{L}^{(r)}_{n,k}(a,q)=
q^{\binom k2-\binom n2-n(k-1)+2\binom r2}
\begin{bmatrix}n+r-1\\k+r-1\end{bmatrix}_{q}
\frac{[n-r]_q !}{[k-r]_q !}
\frac{(aq^{1-n+k} ;q)_{n-k}(aq^{1+2r};q^2)_{k-r}}{(aq^{3-2n};q^2)_{n-r}}.
\end{equation}
Combining \eqref{eqn:aqLah} with the product formula
\eqref{eqn:aqLahprod} gives 
a combinatorial proof of the $q$-Pfaff-Saalsch\"utz sum, in the following form :

\begin{proposition}
\begin{equation}\label{eqn:qPfaff}
\frac{(q^{z+r};q)_n (a^{-1}q^{r-z};q)_n}{(a^{-1};q)_n (q^{2r};q)_n}
=\sum_{k=0}^n \frac{(q^{-n};q)_k (q^{r-z};q)_k (aq^{z+r};q)_k}
{(q;q)_k (q^{2r};q)_k (aq^{1-n};q)_k}q^k.
\end{equation}
\end{proposition}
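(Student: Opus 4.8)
The plan is to derive the claimed identity~\eqref{eqn:qPfaff} by specializing the product formula~\eqref{eqn:aqLahprod} for the Lah board $\mathsf L^{(r)}_n=[n+r-1]\times[n-r]$ and inserting the closed form~\eqref{eqn:aqLah} for the $(a;q)$-Lah numbers. The key observation is that the product formula is a polynomial identity in the variable $q^z$ (after clearing the $(a;q)$-number factors), so it holds for all values; I would thus treat $q^z$ as a free parameter and simply match the two sides term by term. First I would rewrite each $(a;q)$-number occurring on both sides of~\eqref{eqn:aqLahprod} using the definition $[w]_{a;q}=\frac{(1-q^w)(1-aq^w)}{(1-q)(1-aq)}q^{1-w}$, so that the left-hand product becomes, up to an explicit power of $q$ and an overall constant, a product of factors of the form $(1-q^{z+j})(1-aq^{z+j'})$, and similarly on the right each $\prod_{j=1}^k[z-j+1]_{aq^{2(j-1)};q}$ becomes a product collapsing into $q$-shifted factorials $(q^{z-k+1};q)_k$ and $(aq^{z};q)_k$ type expressions.

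Next I would carry out the bookkeeping of the prefactors. On the left side the two products $\prod_{i=1}^{n-r}[z+n-i]_{aq^{2(i-n)};q}$ and $\prod_{i=1}^r[z-i+1]_{aq^{2(i-1)};q}$ telescope into $q$-shifted factorials whose arguments involve $q^z$ and $aq^z$; the shifts $aq^{2(i-n)}$ and $aq^{2(i-1)}$ in the subscript conspire so that the $a$-dependence factors cleanly as a single $(a^{-1}q^{r-z};q)_n$ or $(aq^{z+r};q)_k$ block after reindexing. The goal is to show that the ratio of the left-hand side of~\eqref{eqn:aqLahprod} to the summand-free part produces exactly the left-hand side $\frac{(q^{z+r};q)_n(a^{-1}q^{r-z};q)_n}{(a^{-1};q)_n(q^{2r};q)_n}$ of~\eqref{eqn:qPfaff}, and that the $k$th term on the right, namely $\mathcal L^{(r)}_{n,k}(a,q)\prod_{j=1}^k[z-j+1]_{aq^{2(j-1)};q}$, collapses into the $k$th summand of~\eqref{eqn:qPfaff}. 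The substitution of~\eqref{eqn:aqLah} supplies the $q$-binomial and $(a;q)$-factorial prefactors, and I would use the product-to-Pochhammer conversions $\begin{bmatrix}n+r-1\\k+r-1\end{bmatrix}_q\frac{[n-r]_q!}{[k-r]_q!}=\frac{(q^{-n};q)_?}{(q;q)_?(q^{2r};q)_?}$-type rewritings (reindexing $k\mapsto n-k$ where the Lah number carries $r_{n-k}$) to land on the $_{3}\phi_2$-style summand $\frac{(q^{-n};q)_k(q^{r-z};q)_k(aq^{z+r};q)_k}{(q;q)_k(q^{2r};q)_k(aq^{1-n};q)_k}q^k$.

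The main obstacle I anticipate is precisely this prefactor matching: tracking all the powers of $q$ coming from the $q^{1-w}$ factors in each $(a;q)$-number, the $q^{\binom k2-\binom n2-n(k-1)+2\binom r2}$ factor in~\eqref{eqn:aqLah}, and the $q^k$ weight, and verifying that they combine into the single clean $q^k$ in~\eqref{eqn:qPfaff} with no residual power on either side. One must be careful that the index of summation in~\eqref{eqn:aqLahprod} runs $k=r$ to $n$ with the Lah number $r_{n-k}$, whereas~\eqref{eqn:qPfaff} has its sum running $k=0$ to $n$; the reconciliation is the substitution turning the Lah subscript into the binomial-theorem summation index, and one should check the boundary terms vanish correctly via the stated initial conditions $\mathcal L^{(r)}_{n,k}(a,q)=0$ for $k<r-1$. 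Once the dictionary between the two indexings is fixed and the powers of $q$ are reconciled, the identity follows immediately as an equality of the two polynomial expressions in $q^z$.
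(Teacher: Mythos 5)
Your proposal follows essentially the same route as the paper: substitute the closed form \eqref{eqn:aqLah} for $\mathcal L^{(r)}_{n,k}(a,q)=r_{n-k}(aq^{2(1-r)},q;\mathsf L^{(r)}_n)$ into the product formula \eqref{eqn:aqLahprod}, expand the $(a;q)$-numbers into $q$-shifted factorials, and simplify with appropriate shifts of $n$ and $k$ to reconcile the summation ranges. The bookkeeping concerns you raise (powers of $q$, reindexing) are exactly the content of the paper's intermediate display \eqref{eqn:qPfaffpf}, so the approach is correct and matches the paper's.
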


\begin{proof}
If we replace $r_{n-k}(aq^{2(1-r)},q;\mathsf L^{(r)}_n)$ by the
closed form given in \eqref{eqn:aqLah}, we obtain
\begin{multline}\label{eqn:qPfaffpf}
\frac{(q^{z+r};q)_{n-r}(aq^{z-n+1};q)_{n-r}}{(1-q)^{n-r}(aq^{3-2n};q^2)_{n-r}}
\frac{(q^{z-r+1};q)_r(aq^z ;q)_r}{(1-q)^r(aq;q^2)_r}q^{-nz+\frac{1}{2}n(3-n)+r^2-r}\\
=\sum_{k=r}^n q^{\binom k2-\binom n2-n(k-1)+2\binom r2-kz+\binom{k+1}{2}}
\frac{(q;q)_{n+r-1}}{(q;q)_{n-k}(q;q)_{k+r-1}}
\frac{(q;q)_{n-r}}{(q;q)_{k-r}(1-q)^{n-k}}\\
\times \frac{(q^{z-k+1};q)_k (aq^z ;q)_k}{(1-q)^k (aq;q^2)_k}
\frac{(aq^{1-n+k} ;q)_{n-k}(aq^{1+2r};q^2)_{k-r}}{(aq^{3-2n};q^2)_{n-r}}.
\end{multline}
Then \eqref{eqn:qPfaff} is the result of simplifying
\eqref{eqn:qPfaffpf} with appropriate shifts of $n$ and $k$.
\end{proof}

\begin{remark}
If we perform the substitution $A=aq^{z+r}$, $B=q^{r-z}$ and $C=q^{2r}$ in
\eqref{eqn:qPfaff}, we get 
\begin{equation}\label{eqn:qPfaffor}
\frac{(C/A, C/B;q)_n}{(C, C/AB;q)_n}=
{}_3\phi_2\left[\begin{array}{c}A,B,q^{-n}\\
C, ABC^{-1}q^{1-n}\end{array};q,q \right]
\end{equation}
which is the $q$-Pfaff-Saalsch\"utz summation \cite[(II.12)]{GR}, written in
standard basic hypergeometric form (cf.\ \cite{GR}).
The problem with this substitution is that whereas $a$ and $z$ are general
parameters, $r$ is not.
To show that \eqref{eqn:qPfaff}, where $r$ is a nonnegative integer,
 is actually equivalent to the general case where $r$ is any complex number,
 works by a standard polynomial argument.
 If we multiply both sides of \eqref{eqn:qPfaff} by $(q^{2r};q)_n$
 and formally replace $q^r$ by $x$ we obtain a polynomial equation in $x$ of
 degree $2n$ which is valid for $x=q^r$, for $r=0,1,2\dots$
(i.e., for more than $2n$ values) thus must be true for all complex $x$.
\end{remark}

As mentioned in the introduction, there exist also other combinatorial proofs
of the ${}_3\phi_2$ summation. Among the references we have listed,
Yee's paper \cite{Y} is remarkable as the proof there establishes the full
$q$-Pfaff-Saalsch\"utz summation at once and no appeal to a polynomial argument
is needed.


\subsection{$(a;q)$-Extension of the alpha-parameter model}


In \cite{GH}, Goldman and Haglund introduced generalized rook models,
called \emph{$i$-creation model} and \emph{alpha-parameter model},
which we briefly introduce first. 

Given a board $B$, a \emph{file placement} of $k$ rooks is a
$k$-subset of $B$ such that no two cells lie in the same column,
that is, there can be two or more rooks in the same row,
but each column contains at most one rook.
Let $\mathcal F_k(B)$ denote the set of all $k$-file placements.
Given a Ferrers board $B$ and a file placement
$P\in \mathcal F_k (B)$, 
we assign weights to the rows containing rooks as follows.
If there are $u$ rooks in a given 
row, then the weight of this row is 
\begin{equation*}
\begin{cases}
1 & \text{ if } 0\le u\le 1,\\
\alpha(2 \alpha-1)(3\alpha -2)\cdots ((u-1)\alpha -(u-2)),& 
\text{ if } u\ge 2. \end{cases}
\end{equation*}
The weight of a placement $P$, $wt(P)$, is the product of the weights of
all the rows. Then for a Ferrers board $B$, set 
$$r_k ^{(\alpha)}(B)=\sum_{P\in \mathcal F_k (B)}wt(P).$$
Note that for $\alpha =0$, $r_k ^{(0)}(B)$ reduces to the original
rook number. If $\alpha$ is a positive integer $i$, $r_k ^{(i)}(B)$
is the \emph{$i$-creation}
rook number which counts the number of $i$-creation rook placements of
$k$ rooks on $B$. The $i$-creation rook placement is defined as follows:
we first choose the columns to place the rooks. Then as we place rooks
from left to right, each time a rook is placed, $i$ new rows are created
drawn to the right end and immediately above where the rook is placed. 

In this setting Goldman and Haglund \cite{GH} proved the
\emph{$\alpha$-factorization theorem}:
given a Ferrers board $B=B(b_1,\dots, b_n)$,
\begin{equation*}\label{eqn:alpha}
\prod_{j=1}^n (z+b_j +(j-1)(\alpha -1)) 
= \sum_{k=0}^n r_k ^{(\alpha)}(B)z (z+\alpha -1)\cdots (z+(n-k-1)(\alpha -1)).
\end{equation*}
Furthermore, Goldman and Haglund defined a $q$-analogue of
$r_k ^{(\alpha)}(B)$ by assigning $q$-weights to the cells in $B$.
Here, we describe the $(a;q)$-extension of their result 
which involves the use of the extra variable $a$ in the weights of the cells. 

Given a Ferrers board $B$ and a rook placement $P\in \mathcal N_k(B)$,
for each cell $c\in B$, let $v(c)$ be
the number of rooks strictly to the left of, and in the same row as $c$
and $r_c(P)$ be the number of rooks in the north-west region 
of $c$. Then define the weight of $c$ to be 
\begin{equation*}
wt_\alpha(c)=
\begin{cases}
 1, \hfill\text{ if there is a rook above and in the same column as $c$,}\\
 [(\alpha -1)v(c)+1]_{aq^{2(-j+(\alpha-1)(1-i+r_c(P)))};q},
\qquad\qquad \text{ if $c$ contains a rook,}\\
 W_{aq^{2(-j+(\alpha-1)(1-i+r_c(P)))};q}((\alpha -1)v(c) +1),
\hfill\text{ otherwise.}\qquad\qquad
\end{cases}
\end{equation*}
The weight of the rook placement $P$ is defined to be the product of 
the weights of all cells:
\begin{equation*}
wt_\alpha (P)=\prod_{c\in B}wt_\alpha (c).
\end{equation*}
We define an $(a;q)$-analogue of $r_k ^{(\alpha)}(B)$ by setting 
\begin{equation*}
r_k ^{(\alpha)}(a,q;B)=\sum_{P\in\mathcal F_k (B) }wt_\alpha (P).
\end{equation*}
With this $r_k ^{(\alpha)}(a,q;B)$, we can also prove an $(a;q)$-analogue of 
the $\alpha$-factorization theorem.

\begin{theorem}\label{thm:aqalpha}
For any Ferrers board $B=B(b_1,b_2,\dots, b_n)$, we have 
\begin{align}\label{eqn:aqalphaprod}
&\prod_{j=1}^n [z+b_j+(j-1)(\alpha -1)]_{aq^{-2(b_j+(j-1)(\alpha-1))};q}\notag\\
&=\sum_{k=0}^n r_{n-k}^{(\alpha)} (a,q;B)
\prod_{i=1}^k[z+(i-1)(\alpha -1)]_{aq^{-2(i-1)(\alpha-1)};q}. 
\end{align}
\end{theorem}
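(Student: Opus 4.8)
The plan is to prove Theorem~\ref{thm:aqalpha} by induction on the number of columns $n$, mirroring the strategy that underlies the standard-model factorization of Theorem~\ref{thm:aqprod}. The base case $n=0$ (the empty board) is trivial, since both sides reduce to $1$. For the inductive step I would isolate the rightmost column: write $B=B(b_1,\dots,b_n)$ and consider the board $B'=B(b_1,\dots,b_{n-1})$ obtained by deleting the last column. The heart of the argument is to derive a \emph{deletion recursion} for the $(a;q)$-alpha rook numbers $r_k^{(\alpha)}(a,q;B)$ analogous to Proposition~\ref{prop:recur}, by distinguishing file placements according to how many rooks sit in the final column. Because we are in the \emph{file} model rather than the nonattacking model, a rook in the last column can be accompanied by further rooks in its row coming from earlier columns; the $\alpha$-weighting of rows with multiple rooks is precisely what must be tracked here, and this is where the factors $[(\alpha-1)v(c)+1]_{a;q}$ and $W_{a;q}((\alpha-1)v(c)+1)$ enter.

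The key steps, in order, are as follows. First, I would establish the column-deletion recursion for $r_k^{(\alpha)}(a,q;B)$ in closed $(a;q)$-form, summing over the number of rooks placed in the final column (height $b_n$) and accounting for the $\alpha$-creation weight contributed when that column's rook shares its row with previously placed rooks. Second, I would record the corresponding factorization of the product side: the new factor on the left of \eqref{eqn:aqalphaprod} is
\[
[z+b_n+(n-1)(\alpha-1)]_{aq^{-2(b_n+(n-1)(\alpha-1))};q},
\]
and I would re-expand this single $(a;q)$-number against the basis $\prod_{i=1}^k[z+(i-1)(\alpha-1)]_{aq^{-2(i-1)(\alpha-1)};q}$ using the addition formula $[y+z]_{a;q}=[y]_{a;q}+W_{a;q}(y)[z]_{aq^{2y};q}$ and the multiplicativity $W_{a;q}(k+n)=W_{a;q}(k)W_{aq^{2k};q}(n)$ stated in Section~\ref{sec:defs}. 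Third, I would match the two expansions: after substituting the inductive hypothesis for $B'$ into the product over the first $n-1$ factors, comparing coefficients of each basis element $\prod_{i=1}^k[z+(i-1)(\alpha-1)]_{\cdots}$ should reproduce exactly the deletion recursion from the first step, closing the induction.

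The main obstacle I anticipate is the bookkeeping of the shift parameters in the base $a$. Every $(a;q)$-number in \eqref{eqn:aqalphaprod} carries its own shift $aq^{-2(\cdots)}$, and when the addition and multiplication rules are applied the internal shifts must line up so that the factor peeled off the product side agrees, term by term, with the weight a rook and its row-mates contribute in the recursion. In particular the exponent $-2(b_j+(j-1)(\alpha-1))$ on the left and the exponent $-2(i-1)(\alpha-1)$ on the right must be reconciled through the telescoping of the $W_{a;q}$ factors, and the $(\alpha-1)$-dependence of the row weights $[(\alpha-1)v(c)+1]_{a;q}$ has to be shown to be precisely what the algebraic identity demands. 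Verifying that these shifts are consistent across all values of $k$ is the delicate part; once the shift structure is pinned down, the remaining manipulations are routine applications of the two functional equations for $[\,\cdot\,]_{a;q}$ and $W_{a;q}$. As a sanity check throughout, I would confirm that taking $a\to\infty$ recovers the Goldman--Haglund $q$-analogue, and setting $\alpha=0$ collapses the file model to the nonattacking model of Theorem~\ref{thm:aqprod}.
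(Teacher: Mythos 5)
Your proposal is a correct line of attack, but it is genuinely different from the proof in the paper. The paper does not induct on columns: it extends $B$ by $z$ rows of width $n$ below the board to form $B_z$ and evaluates $\sum_{P\in\mathcal F_n(B_z)}wt_\alpha(P)$ in two ways --- columnwise (each column sum telescopes, via the addition formula for $[\,\cdot\,]_{a;q}$, to a single factor $[z+b_j+(j-1)(\alpha-1)]_{aq^{-2(b_j+(j-1)(\alpha-1))};q}$, giving the left-hand side) and by splitting each placement according to which rooks land in $B$ and which in the extension (giving $r_{n-k}^{(\alpha)}(a,q;B)$ times the product over the extension, i.e.\ the right-hand side). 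Your induction replaces this single global double count by a column-deletion recursion plus the re-expansion of the peeled-off factor against the basis $\prod_{i=1}^k[z+(i-1)(\alpha-1)]_{aq^{-2(i-1)(\alpha-1)};q}$; the two routes are two faces of the same telescoping, and yours has the side benefit of producing an alpha-analogue of Proposition~\ref{prop:recur} as a reusable lemma, at the cost of more explicit bookkeeping of the base shifts that the extended-board argument absorbs automatically.

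The one point you must nail down for the induction to close --- and which you gesture at but do not prove --- is that the deletion recursion has coefficients depending \emph{only} on the number of rooks in $B'=B(b_1,\dots,b_{n-1})$, not on which rows they occupy. A priori the last-column contribution involves $v(c)$ for each cell $c$ of column $n$, which varies with the row positions of the earlier rooks. What saves you is that, since $B$ is a Ferrers board, column $n$ is the tallest, so every rook of a file placement on $B'$ lies in a row meeting column $n$; hence $\sum_{c}\bigl((\alpha-1)v(c)+1\bigr)$ over the cells of column $n$ equals $b_n+(\alpha-1)m$ for a placement with $m$ rooks on $B'$, and the sum over the (at most one, since this is the file model) rook position in column $n$ telescopes to $[b_n+(\alpha-1)m]$ and $W(b_n+(\alpha-1)m)$ with the appropriate base shifts. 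Make that independence statement an explicit lemma and your induction goes through; also note that in a file placement a column contains at most one rook, so ``how many rooks sit in the final column'' is just a dichotomy, and the multiplicity weights $[(\alpha-1)v(c)+1]$ enter through rows, not columns.
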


\begin{proof}
Let us extend the board by attaching $z$ rows of width $n$ below the
board $B$, denoted by $B_z$, and compute 
$$\sum_{P\in\mathcal F_n (B_z)}wt_\alpha (P)$$
in two different ways. The left-hand side of \eqref{eqn:aqalphaprod}
is the result of computing the above weight sum columnwise, and the
right-hand side can be obtained by computing the weight of the cells
in $B$ and and the cells in the extended part 
separately. For the details, see \cite{SY2}.
\end{proof}

\begin{remark}
In \cite{SY1}, the authors have constructed a general rook theory model 
utilizing an augmented rook board
which can be specialized to all the known rook theory models.
The product formula in Theorem \ref{thm:aqalpha} was also obtained in
\cite[(4.17)]{SY1} but by using a different approach.
\end{remark}

In the case $\alpha=2$ and the board is of the staircase shape
$St_n=B(0,1,2,\dots, n-1)$, 
$r_k ^{(2)}(a,q;St_n)$ has a closed form expression (see \cite{SY0}).
\begin{equation}\label{eqn:alpha2rk}
r_k ^{(2)}(a,q;St_n)
=q^{-\binom{n+k}{2}+k(k+2)}\begin{bmatrix}n+k-1\\2k\end{bmatrix}_q
\prod_{j=1}^k [2j-1]_q
\frac{(a q;q^{-2})_{n-k} (a q^{1-2n};q^2)_k}{(a q;q^{-4})_n}.
\end{equation}

We now give combinatorial proofs of two special
${}_4\phi_3$ summations.

\begin{proposition}
\begin{equation}\label{eqn:aqalphaprop1}
\frac{(q^{z+2},a^{-1}q^{2-z};q^2)_n}{(q,a^{-1}q^3;q^2)_n} 
=\sum_{k=0}^n \frac{(q^{-n}, -q^{-n},q^{z+1},a^{-1}q^{1-z};q)_k}
{(q,q^{-2n},a^{-1/2}q^{3/2},-a^{-1/2}q^{3/2};q)_k}q^k,
\end{equation}
and,
\begin{equation}\label{eqn:aqalphaprop2}
\frac{(q^{z+2},aq^{z-2n};q^2)_n}{(q^{z+1},aq^{z-n};q)_n} 
=\sum_{k=0}^n \frac{(q^{-n}, q^{n+1}, a^{1/2}q^{-n-1/2}, -a^{1/2}q^{-n-1/2};q)_k}
{(q,-q, q^{-z-n}, aq^{z-n};q)_k}q^k.
\end{equation}
\end{proposition}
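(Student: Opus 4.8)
The plan is to derive both summations by specializing the $(a;q)$-factorization theorem (Theorem~\ref{thm:aqalpha}) to the case $\alpha=2$ on the staircase board $St_n=B(0,1,\dots,n-1)$, and then substituting the closed-form expression \eqref{eqn:alpha2rk} for $r_k^{(2)}(a,q;St_n)$. First I would write out the product formula \eqref{eqn:aqalphaprod} for this particular board. With $\alpha=2$ we have $\alpha-1=1$, so $b_j+(j-1)(\alpha-1)=(j-1)+(j-1)=2(j-1)$, and the left-hand side becomes $\prod_{j=1}^n[z+2(j-1)]_{aq^{-4(j-1)};q}$, while the right-hand factors on the summation side become $\prod_{i=1}^k[z+(i-1)]_{aq^{-2(i-1)};q}$. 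The next step is to expand each $(a;q)$-number $[w]_{a;q}=\frac{(1-q^w)(1-aq^w)}{(1-q)(1-aq)}q^{1-w}$ into its explicit factored form, collecting the resulting products $(1-q^{\cdot})$ and $(1-aq^{\cdot})$ into $q$-shifted factorials.

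The key structural observation is that the spacing by $2(j-1)$ on the left (versus $(i-1)$ on the right) is exactly what produces base-$q^2$ Pochhammer symbols on the left-hand side and base-$q$ symbols in the summand, which matches the mixed-base shape of \eqref{eqn:aqalphaprop1} and \eqref{eqn:aqalphaprop2}. Concretely, the left side $\prod_{j=1}^n[z+2(j-1)]_{aq^{-4(j-1)};q}$ should telescope into ratios like $(q^{z+2};q^2)_n$ and $(a^{-1}q^{2-z};q^2)_n$ after pulling out the prefactors; for the summand, the factor $[2j-1]_q$ from \eqref{eqn:alpha2rk} together with the expansion of the $(a;q)$-numbers should generate the quadratic-base pairs such as $(-q^{-n};q)_k$ and $(-a^{-1/2}q^{3/2};q)_k$ via the standard identity $(1-x^2)=(1-x)(1+x)$ applied to $q^{2m}$ and $aq^{2m}$ type factors. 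I would carry out the two cases in parallel, since \eqref{eqn:aqalphaprop1} and \eqref{eqn:aqalphaprop2} differ essentially by the choice of whether to retain $a$ or pass to $a^{-1}$ (equivalently, by which direction one reads the $(a;q)$-number $q^{1-z}$-type prefactors), reflecting the two natural ways of grouping the factored numerator and denominator.

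The main obstacle will be the exponent bookkeeping: tracking the accumulated powers of $q$ coming from the $q^{1-w}$ prefactor in each $(a;q)$-number, from the $q^{-\binom{n+k}{2}+k(k+2)}$ prefactor in \eqref{eqn:alpha2rk}, and from the $q$-binomial coefficient $\begin{bmatrix}n+k-1\\2k\end{bmatrix}_q$, and verifying that they combine into the single $q^k$ appearing in the summand of each proposition. A closely related difficulty is confirming that the $a$-dependent factors $(aq;q^{-2})_{n-k}$, $(aq^{1-2n};q^2)_k$, and $(aq;q^{-4})_n$ in \eqref{eqn:alpha2rk} reorganize, after reversing the negative-base products via $(x;q^{-1})_m=(xq^{-m+1};q)_m q^{-\binom{m}{2}}$, into the half-integer-base terms $a^{\pm1/2}q^{\cdots}$ that split off the $\pm$ signs. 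Once the left-hand product is simplified and the summand matched term by term, the identity follows directly, so after this simplification no further argument (and in particular no polynomial or limiting argument) should be needed; I would finally record the substitution $z\mapsto$ the appropriate shift that casts each result into standard ${}_4\phi_3$ form, as was done for the $q$-Pfaff--Saalsch\"utz case.
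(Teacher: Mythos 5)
Your proposal matches the paper's proof: both identities are obtained by specializing the product formula of Theorem~\ref{thm:aqalpha} to $\alpha=2$ on the staircase board $St_n$ (so the left side is $\prod_{j=1}^n[z+2(j-1)]_{aq^{-4(j-1)};q}$), substituting the closed form \eqref{eqn:alpha2rk} for the $(a;q)$-rook numbers, and carrying out the elementary $q$-shifted-factorial simplifications. The only cosmetic difference is that the paper attributes the passage between \eqref{eqn:aqalphaprop1} and \eqref{eqn:aqalphaprop2} to the index reversal $k\mapsto n-k$ in the same expansion, whereas you describe it as a choice of grouping of the $a$-dependent factors; these amount to the same thing.
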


\begin{proof}
If we use the closed form expression for $r_{n-k}^{(2)}(a,q;St_n)$
of \eqref{eqn:alpha2rk}
in \eqref{eqn:aqalphaprod} for $b_j=j-1$, we get 
\begin{multline*}
\frac{(q^z;q^2)_n}{(1-q)^n}\frac{(a q^z;q^{-2})_n}
{(aq;q^{-4})_n}q^{-n(z+1)}=\\
\sum_{k=0}^n q^{-k(z+1)}\frac{(q;q)_{2n-k-1}}{(q;q)_{2n-2k}(q;q)_{k-1}}
\frac{(q;q^2)_{n-k}}{(1-q)^{n-k}}
\frac{(aq;q^{-2})_k (aq^{1-2n};q^2)_{n-k}}{(aq;q^{-4})_n}
\frac{(q^z ;q)_k}{(1-q)^k}\frac{(aq^z;q^{-1})_k}{(aq;q^{-2})_k}
\end{multline*}
which after some elementary manipulations simplifies to
\eqref{eqn:aqalphaprop1}.

Similarly, \eqref{eqn:aqalphaprop2} is 
the result of simplifying 
\begin{equation*}
\prod_{j=1}^n [z+2(j-1)]_{aq^{-4(j-1)};q}
=\sum_{k=0}^n r_{k}^{(2)} (a,q;St_n)
\prod_{i=1}^{n-k}[z+i-1]_{aq^{-2(i-1)};q},
\end{equation*}
after replacing $r_k ^{(2)}(a,q;St_n)$ by the closed form expression
in \eqref{eqn:alpha2rk}.
\end{proof}
The summation in \eqref{eqn:aqalphaprop1}
is equivalent (up to an obvious substitution of variables)
to Jain's ${}_4\phi_3$ summation \eqref{43id}
mentioned in the introduction.

The summation in \eqref{eqn:aqalphaprop2}
can be also verified by the following terminating $q$-analogue 
of Whipple's ${}_3 F_2$ sum \cite[(II.19)]{GR},
\begin{equation*}
{}_4\phi_3 \left[\begin{array}{c} q^{-n}, q^{n+1}, C, -C \\ 
E, C^2 q/E, -q \end{array};q,q \right]
=\frac{(E q^{-n},E q^{n+1}, C^2 q^{1-n}/E, C^2 q^{n+2}/E;q^2)_{\infty}}
{(E, C^2 q/E;q)_\infty}q^{\binom{n+1}{2}},
\end{equation*}
where we take $C=a^{1/2}q^{-n-1/2}$ and $E=q^{-z-n}$, and apply 
\begin{align*}
&\frac{(q^{-z-2n},q^{1-z},aq^{z-2n},aq^{z+1};q^2)_\infty}
{(q^{-z-n},aq^{z-n};q)_\infty}q^{\binom{n+1}{2}}\\
=&\frac{(q^{-z-2n};q^2)_n}{(q^{-z-n};q)_n}
\frac{(q^{-z},q^{1-z};q^2)_\infty}{(q^{-z};q)_\infty}
\frac{(aq^{z-2n};q^2)_n}{(aq^{z-n};q)_n}
\frac{(aq^z,aq^{z+1};q^2)_\infty}{(aq^z ;q)_\infty}q^{\binom{n+1}{2}}\\
=&\frac{(q^{z+2};q^2)_n}{(q^{z+1};q)_n}\frac{(aq^{z-2n};q^2)_n}{(aq^{z-n};q)_n}.
\end{align*}
The two summations in \eqref{eqn:aqalphaprop1} and
\eqref{eqn:aqalphaprop2} are actually equivalent to each other;
one follows from the other by reversing the sum (i.e., substituting
the summation index $k\mapsto n-k$).


\subsection{$(a;q)$-Rook theory for matchings}


Haglund and Remmel \cite{HR} extended the rook theory by considering 
partial matchings as opposed to considering partial permutations 
in the original rook theory, and for which they consider the shifted board 
$B_{2n}$ pictured in Figure~\ref{fig:B2n}.

\setlength{\unitlength}{1.2pt}
\begin{figure}[ht]
\begin{picture}(120,116)(0,0)
\multiput(105,0)(0,15){1}{\line(1,0){15}}
\multiput(90,15)(0,15){1}{\line(1,0){30}}
\multiput(75,30)(0,15){1}{\line(1,0){45}}
\multiput(60,45)(0,15){1}{\line(1,0){60}}
\multiput(45,60)(0,15){1}{\line(1,0){75}}
\multiput(30,75)(0,15){1}{\line(1,0){90}}
\multiput(15,90)(0,15){1}{\line(1,0){105}}
\multiput(15,105)(0,15){1}{\line(1,0){105}}
\multiput(15,90)(0,15){1}{\line(0,1){15}}
\multiput(30,75)(0,15){1}{\line(0,1){30}}
\multiput(45,60)(0,15){1}{\line(0,1){45}}
\multiput(60,45)(0,15){1}{\line(0,1){60}}
\multiput(75,30)(0,15){1}{\line(0,1){75}}
\multiput(90,15)(0,15){1}{\line(0,1){90}}
\multiput(105,0)(0,15){1}{\line(0,1){105}}
\multiput(120,0)(0,15){1}{\line(0,1){105}}
\put(20,107){$2$}
\put(35,107){$3$}
\put(50,109){$\cdot$}
\put(64,109){$\cdot$}
\put(78,109){$\cdot$}
\put(88,107){$2n$-$1$}
\put(109,107){$2n$}
\put(6, 94){$1$}
\put(6, 79){$2$}
\put(7, 63){$\cdot$}
\put(7, 48){$\cdot$}
\put(7, 33){$\cdot$}
\put(1,17){$2n$-$2$}
\put(1, 2){$2n$-$1$}
\end{picture}
\caption{$B_{2n}$.}\label{fig:B2n}
\end{figure}

For each perfect matching $M$ of $K_{2n}$ consisting of
$n$ pairwise vertex disjoint edges in $K_{2n}$,
where $K_{2n}$ is the complete graph on the set of vertices
$\{1,2,\dots, 2n\}$, let 
$$P_{M}=\{ (i,j) ~|~ i<j \text{ and } \{ i,j\} \in M\},$$
where $(i,j)$ denotes the square in row $i$ and column $j$ of $B_{2n}$
according to the labeling of rows and columns pictured in
Figure~\ref{fig:B2n}. A rook placement in $B_{2n}$ is defined to
be a subset of some $P_M$ for a perfect matching $M$ of $K_{2n}$. 

Given a board $B\subseteq B_{2n}$,
we let $\mathcal{M}_k (B)$ denote the set of $k$ element rook placements
in $B$. In this setting, we let $B(a_1,a_2,\dots, a_{2n-1})$ denote the 
following set of cells in $B_{2n}$ :
$$B(a_1,a_2,\dots, a_{2n-1})=\{ (i,i+j)~|~ 1\le i\le 2n-1,1\le j\le a_i\}.$$
It is called a \emph{shifted Ferrers board} if
$2n-1\ge a_1\ge a_2 \ge \cdots \ge a_{2n-1} \ge 0$ and the nonzero entries
of $a_i$'s are strictly decreasing. A rook in $(i,j)$ with $i<j$ in a
rook placement \emph{cancels} all cells $(i,s)$ in $B_{2n}$ with $i<s<j$ and
all cells $(t,j)$ and $(t,i)$ with $t<i$.
See Figure~\ref{fig:sboardc} for a specific example of a shifted Ferrers board
and the cells being cancelled by a rook on the shifted board $B_8$.\\
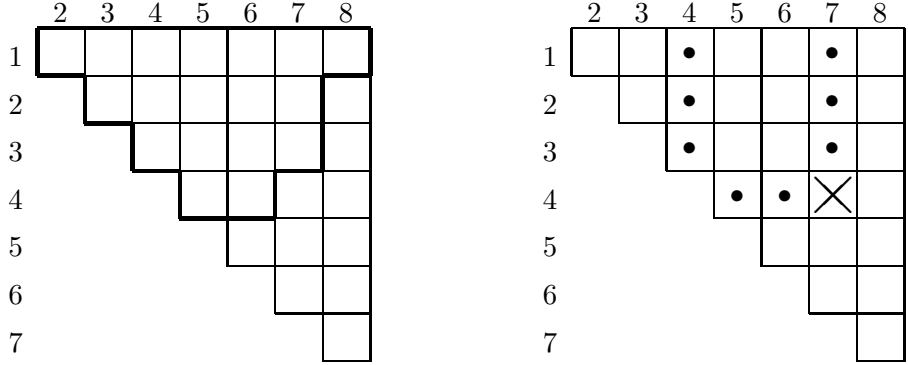
\begin{figure}[ht]
$$\begin{array}{cc}
\begin{picture}(160,123)(0,-3)
\multiput(105,0)(0,15){1}{\line(1,0){15}}
\multiput(90,15)(0,15){1}{\line(1,0){30}}
\multiput(75,30)(0,15){1}{\line(1,0){45}}
\multiput(60,45)(0,15){1}{\line(1,0){60}}
\multiput(45,60)(0,15){1}{\line(1,0){75}}
\multiput(30,75)(0,15){1}{\line(1,0){90}}
\multiput(15,90)(0,15){1}{\line(1,0){105}}
\multiput(15,105)(0,15){1}{\line(1,0){105}}
\multiput(15,90)(0,15){1}{\line(0,1){15}}
\multiput(30,75)(0,15){1}{\line(0,1){30}}
\multiput(45,60)(0,15){1}{\line(0,1){45}}
\multiput(60,45)(0,15){1}{\line(0,1){60}}
\multiput(75,30)(0,15){1}{\line(0,1){75}}
\multiput(90,15)(0,15){1}{\line(0,1){90}}
\multiput(105,0)(0,15){1}{\line(0,1){105}}
\multiput(120,0)(0,15){1}{\line(0,1){105}}
\thicklines\linethickness{1.3pt}
\multiput(15,105)(0,15){1}{\line(1,0){105}}
\multiput(15,90)(0,15){1}{\line(0,1){15}}
\multiput(15,90)(0,15){1}{\line(1,0){15}}
\multiput(30,75)(0,15){1}{\line(0,1){15}}
\multiput(30,75)(0,15){1}{\line(1,0){15}}
\multiput(45,60)(0,15){1}{\line(0,1){15}}
\multiput(45,60)(0,15){1}{\line(1,0){15}}
\multiput(60,45)(0,15){1}{\line(0,1){15}}
\multiput(60,45)(0,15){1}{\line(1,0){30}}
\multiput(90,60)(0,15){1}{\line(1,0){15}}
\multiput(105,90)(0,15){1}{\line(1,0){15}}
\multiput(90,45)(0,15){1}{\line(0,1){15}}
\multiput(105,60)(0,15){1}{\line(0,1){30}}
\multiput(120,90)(0,15){1}{\line(0,1){15}}
\put(20,107){$2$}
\put(35,107){$3$}
\put(50,107){$4$}
\put(65,107){$5$}
\put(80,107){$6$}
\put(95,107){$7$}
\put(110,107){$8$}
\put(6, 93){$1$}
\put(6, 78){$2$}
\put(6, 63){$3$}
\put(6, 48){$4$}
\put(6, 33){$5$}
\put(6, 18){$6$}
\put(6, 3){$7$}
\end{picture}
&
\begin{picture}(120,123)(0,-3)
\multiput(105,0)(0,15){1}{\line(1,0){15}}
\multiput(90,15)(0,15){1}{\line(1,0){30}}
\multiput(75,30)(0,15){1}{\line(1,0){45}}
\multiput(60,45)(0,15){1}{\line(1,0){60}}
\multiput(45,60)(0,15){1}{\line(1,0){75}}
\multiput(30,75)(0,15){1}{\line(1,0){90}}
\multiput(15,90)(0,15){1}{\line(1,0){105}}
\multiput(15,105)(0,15){1}{\line(1,0){105}}
\multiput(15,90)(0,15){1}{\line(0,1){15}}
\multiput(30,75)(0,15){1}{\line(0,1){30}}
\multiput(45,60)(0,15){1}{\line(0,1){45}}
\multiput(60,45)(0,15){1}{\line(0,1){60}}
\multiput(75,30)(0,15){1}{\line(0,1){75}}
\multiput(90,15)(0,15){1}{\line(0,1){90}}
\multiput(105,0)(0,15){1}{\line(0,1){105}}
\multiput(120,0)(0,15){1}{\line(0,1){105}}
\thicklines
\multiput(92,58)(0,15){1}{\line(1,-1){11}}
\multiput(92,47)(0,15){1}{\line(1,1){11}}
\put(20,107){$2$}
\put(35,107){$3$}
\put(50,107){$4$}
\put(65,107){$5$}
\put(80,107){$6$}
\put(95,107){$7$}
\put(110,107){$8$}
\put(6, 93){$1$}
\put(6, 78){$2$}
\put(6, 63){$3$}
\put(6, 48){$4$}
\put(6, 33){$5$}
\put(6, 18){$6$}
\put(6, 3){$7$}
\put(95,95){$\bullet$}
\put(95,80){$\bullet$}
\put(95,65){$\bullet$}
\put(80,50){$\bullet$}
\put(65,50){$\bullet$}
\put(50,95){$\bullet$}
\put(50,80){$\bullet$}
\put(50,65){$\bullet$}
\end{picture}
\end{array}$$
\caption{The shifted Ferrers board $B=(7,5,4,2,0,0,0)\subseteq B_8$, 
and the cells cancelled by a rook in $(4,7)$ on $B_8$.}\label{fig:sboardc}
\end{figure}

\begin{definition}
Given a shifted Ferrers board $B=B(a_1,\dots, a_{2n-1})\subseteq B_{2n}$
and a rook placement $P\in \mathcal{M}_k (B)$, define
\begin{equation*}
m_k (a,q;B)=\sum_{P\in \mathcal M_k (P)}wt_m(P), 
\end{equation*}
where 
\begin{equation*}
wt_m (P)=\prod_{(i,j)\in U_B (P)}
w_{a;q}(\hat{i}+\hat{j}-1-2r_{(i,j)}(P)-s_{(i,j)}(P)),
\end{equation*}
$U_{B}(P)$ denote
the set of cells in $B$ which are neither cancelled by rooks
nor contain any rooks 
in $P$, $r_{(i,j)}(P)$ is the number of rooks in $P$ positioned south-east
of $(i,j)$ such that the two columns cancelled by those rooks are
to the right of the column $j$, $s_{(i,j)}(P)$ is the number of rooks
in $P$ which are in the south-east region of $(i,j)$ such that only
one cancelled column
is to the right of column $j$, and $\hat{i}:=2n-i$.
\end{definition}

We also have a product formula involving $m_k (a,q;B)$, which
is an $(a;q)$-analogue 
of the product formula proved by Haglund and Remmel~\cite{HR}.

\begin{theorem}
Given a shifted Ferrers board $B=B(a_1,\dots, a_{2n-1})\subseteq B_{2n}$, 
we have 
\begin{equation}\label{eqn:mprod}
\prod_{i=1}^{2n-1}[z+a_{2n-i}-2i+2]_{aq^{2(2i-2-a_{2n-i})};q}
=\sum_{k=0}^n m_k (a,q;B)\prod_{j=1}^{2n-1-k}[z-2j+2]_{aq^{4j-4};q}.
\end{equation} 
\end{theorem}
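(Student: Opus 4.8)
The plan is to follow the same column-counting strategy that underlies the standard and alpha-parameter factorization theorems (Theorems~\ref{thm:aqprod} and~\ref{thm:aqalpha}), adapted to the shifted-board matching setting. Concretely, I would attach $z$ rows of full shifted width below the board $B$, forming an augmented board $B_z \subseteq B_{2(n+?)}$, and compute a single weight sum $\sum_{P} wt_m(P)$ over all \emph{maximal} matching placements on $B_z$ in two different ways. Equating the two evaluations should produce \eqref{eqn:mprod} directly, with the left-hand product arising from a column-by-column (or diagonal-by-diagonal) evaluation and the right-hand side arising from first summing over placements on the original cells of $B$ and then summing over the extended rows.

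The first step is to make precise what the ``$z$ extra rows'' mean for a shifted board, since here cancellation is governed by the somewhat intricate rule involving the statistics $r_{(i,j)}(P)$, $s_{(i,j)}(P)$, and the hatted coordinate $\hat{i} = 2n-i$. I would set up the extended board so that a rook placed in the original region contributes, via the cells it leaves uncancelled in the extension, exactly a factor of the form $[z - 2j + 2]_{aq^{4j-4};q}$ for an appropriate index $j$ depending on how many rooks already lie to its south-east. The key computation is to verify that when we evaluate the extended weight sum by scanning columns of the augmented board from the outside in, each column $i$ contributes precisely the factor $[z + a_{2n-i} - 2i + 2]_{aq^{2(2i-2-a_{2n-i})};q}$ appearing on the left of \eqref{eqn:mprod}. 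This is where the additive/telescoping identity $[y+z]_{a;q} = [y]_{a;q} + W_{a;q}(y)[z]_{aq^{2y};q}$ and the multiplicativity $W_{a;q}(k+n) = W_{a;q}(k)\,W_{aq^{2k};q}(n)$ recorded in Section~\ref{sec:defs} do the heavy lifting: they let the small-weight products over uncancelled cells in a single column collapse into one $(a;q)$-number, with the base shifting by exactly the power of $q$ dictated by the positions and rook counts.

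The main obstacle I anticipate is bookkeeping of the base-shift exponents, that is, tracking how the argument $aq^{2(\cdots)}$ of each weight evolves as rooks are added and cells are cancelled. In the standard model the shift is controlled by the single statistic $i - j - r_{(i,j)}(P)$, but in the matching model the weight argument is $\hat{i} + \hat{j} - 1 - 2r_{(i,j)}(P) - s_{(i,j)}(P)$, so each rook on the shifted board cancels \emph{two} columns and the relevant rook-counting statistic splits into the two pieces $r$ and $s$. The delicate point is to check that, under the $aq^{2y}$-rebasing prescribed by the $[y+z]_{a;q}$ addition formula, the combined effect of the doubled cancellation matches the factor-of-two in the $-2j+2$ and $4j-4$ exponents on the right-hand side. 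I would handle this by induction on the number of columns, peeling off the rightmost nonzero column of $B$ and invoking a matching analogue of the recursion in Proposition~\ref{prop:recur}; the inductive step reduces the needed identity to a single application of the two weight-manipulation formulas above, so that verifying the base case and the exponent alignment in the inductive step completes the argument. The full details, being parallel to the computations in \cite{HR} and \cite{SY0,SY2}, I would relegate to those references once the scanning and rebasing bijection is set up.
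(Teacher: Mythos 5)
Your strategy---augmenting the shifted board with $z$ extra rows and evaluating $\sum_P wt_m(P)$ over maximal placements once column-by-column and once by separating the cells of $B$ from the extension---is exactly the approach the paper intends: it is the same double count sketched for Theorem~\ref{thm:aqalpha}, and the paper itself states this matching-model theorem with no in-text proof, deferring to the classical argument of Haglund and Remmel~\cite{HR} and the detailed $(a;q)$/elliptic computation in \cite{SY2}. Your outline correctly identifies the one genuinely delicate point (each rook cancels two columns, which is what produces the steps of $2$ in $[z-2j+2]_{aq^{4j-4};q}$ and the base shifts by $q^{4j-4}$), and, like the paper, you leave that bookkeeping to the cited references.
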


In the case of the full board $B_{2n}=B(2n-1, 2n-2, \dots, 2,1)$,
$m_k(a,q;B_{2n})$ has a closed form 
\begin{equation}\label{eqn:mkB2n}
m_k (a,q;B_{2n})=q^{k^2 -\binom{2n}{2}}\begin{bmatrix}2n\\2k\end{bmatrix}_q 
\prod_{j=1}^k [2j-1]_q \frac{(aq^{4n-2k-3};q^2)_{2n-k-1}}{(aq^{-1};q^4)_{2n-k-1}},
\end{equation}
which can be verified by the following recursion
\begin{align*}\label{eqn:mkrecur}
 &m_k(a,q;B_N)\\
&= [N-2k+1]_{aq^{2(N-3)};q}\,m_{k-1}(a,q;B_{N-1})+W_{aq^{2(N-3)};q}(N-2k-1)\,m_k(a,q;B_{N-1}).
\end{align*}
Replacing $m_k(a,q;B)$ in \eqref{eqn:mprod} for $a_i=2n-i$ by \eqref{eqn:mkB2n}
gives a special case of the $q$-Pfaff-Saalsch\"utz sum. 

\begin{proposition}
\begin{equation}\label{prop:mk}
 \frac{(q^{z-n+\frac{3}{2}},aq^{z-\frac{1}{2}};q)_n}
{(q^{z-2n+2},aq^{z+n-\frac{1}{2}};q)_n}
 =\sum_{k=0}^n \frac{(q^{-n},q^{-n+\frac{1}{2}},q^{\frac{5}{2}-2n}/a;q)_k}
{(q,q^{z-2n+2},q^{2-2n-z}/a;q)_k}q^k.
\end{equation} 
\end{proposition}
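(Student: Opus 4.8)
The plan is to mirror the derivation of \eqref{eqn:qPfaff}: specialize the matching product formula \eqref{eqn:mprod} to the full board $B_{2n}=B(2n-1,2n-2,\dots,2,1)$, substitute the closed-form rook number \eqref{eqn:mkB2n}, and simplify the resulting equation into basic hypergeometric form. Concretely, I would first put $a_i=2n-i$ in \eqref{eqn:mprod}, so that $a_{2n-i}=i$. The left-hand side then collapses to the single product $\prod_{i=1}^{2n-1}[z-i+2]_{aq^{2(i-2)};q}$, while the right-hand side becomes $\sum_{k=0}^n m_k(a,q;B_{2n})\prod_{j=1}^{2n-1-k}[z-2j+2]_{aq^{4j-4};q}$.

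Next I would expand every $(a;q)$-number via $[w]_{a;q}=\frac{(1-q^w)(1-aq^w)}{(1-q)(1-aq)}q^{1-w}$, whereupon each product telescopes into shifted factorials. The product over $i$ on the left splits into the base-$q$ factors $(q^{z-2n+3};q)_{2n-1}$ and $(aq^{z-1};q)_{2n-1}$ and the base-$q^2$ denominator $(aq^{-1};q^2)_{2n-1}$, together with an explicit power of $q$ and a power of $(1-q)$; similarly the product over $j$ yields $(q^z;q^{-2})_{2n-1-k}$, $(aq^z;q^2)_{2n-1-k}$, and the base-$q^4$ denominator $(aq;q^4)_{2n-1-k}$. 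Inserting the closed form \eqref{eqn:mkB2n}, which already carries the $q$-binomial $\begin{bmatrix}2n\\2k\end{bmatrix}_q$, the factor $\prod_{j=1}^k[2j-1]_q$, and the ratio $(aq^{4n-2k-3};q^2)_{2n-k-1}/(aq^{-1};q^4)_{2n-k-1}$, produces an explicit but unwieldy identity analogous to \eqref{eqn:qPfaffpf}.

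The heart of the argument, and the main obstacle, is the reorganization in the final step: one must combine the mixed base-$q$, $q^2$, and $q^4$ shifted factorials into a single base-$q$ summand while carefully accumulating all the separate powers of $q$ coming from the $q^{1-w}$ factors and the prefactor $q^{k^2-\binom{2n}{2}}$. The even/odd and mod-$4$ splittings of these products are precisely what force the half-integer $q$-exponents appearing in the final answer. After cancelling the common powers of $(1-q)$ and $(1-aq)$ and matching the overall power of $q$, the summand collapses to a balanced ${}_3\phi_2$, which is exactly the $q$-Pfaff--Saalsch\"utz sum \eqref{32id} for a suitable choice of parameters; reading off that specialization yields \eqref{prop:mk}. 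As a consistency check one can verify the Saalsch\"utzian (balance) condition directly on the parameters appearing on the right-hand side of \eqref{prop:mk} and confirm the $n=1$ instance by hand before committing to the full simplification.
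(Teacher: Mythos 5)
Your proposal follows the same route as the paper: specialize \eqref{eqn:mprod} to $a_i=2n-i$, insert the closed form \eqref{eqn:mkB2n}, and simplify the mixed base-$q$, $q^2$, $q^4$ factorials (the paper organizes this by first reducing to a clean base-$q^2$ identity and then relabeling $q^2\to q$, $z/2\to z$, which is exactly the mechanism behind the half-integer exponents you point out). Your closing observation that the result is a special case of the $q$-Pfaff--Saalsch\"utz sum matches the remark the paper makes immediately after its proof, so the proposal is correct and essentially identical in approach.
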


\begin{proof}
Putting the closed form expression for $m_k(a,q;B_{2n})$ in
\eqref{eqn:mprod} for $a_i=2n-i$ gives 
\begin{multline*}
 \frac{(q^{z+1};q^{-1})_{2n-1}(aq^{z-1};q)_{2n-1}}
{(1-q)^{2n-1}(aq^{-1};q^2)_{2n-1}}\\
 =\sum_{k=0}^n q^{2k(k-2n+1)+kz}\frac{(q;q)_{2n}}{(q;q)_{2k}(q;q)_{2n-2k}}
\frac{(q;q^2)_k}{(1-q)^k}\frac{(aq^z,aq^{4n-2k-3};q^2)_{2n-k-1}}
{(aq,aq^{-1};q^4)_{2n-k-1}}
 \frac{(q^z;q^{-2})_{2n-k-1}}{(1-q)^{2n-k-1}}
\end{multline*}
which simplifies to 
\begin{equation*}
\frac{(q^{-z-2},aq^{z-2};q)_{2n}}{(q^{-z-2},aq^{z-2};q^2)_{2n}}q^{n(2n-1)}
=\sum_{k=0}^n \frac{(q^{-2n},q^{1-2n},q^{5-4n}/a;q^2)_k}
{(q^2, q^{z-4n+4},q^{4-4n-z}/a;q^2)_k}q^{2k}.
\end{equation*}
The identitiy can now
be obtained by replacing $q^2\to q$ and $z/2\to z$. 
\end{proof}
\begin{remark}
The identity \eqref{prop:mk} (proved combinatorially) is actually the
\begin{equation*}
A=q^{-n+\frac{1}{2}},\quad B=q^{\frac{5}{2}-2n}/a,
\quad\text{ and } ~C=q^{z-2n+2}
\end{equation*}
special case of the
$q$-Pfaff-Saalsch\"utz sum \eqref{eqn:qPfaffor}.
\end{remark}


 
\end{document}